\newtheorem{theorem}{Theorem}
\newtheorem{lemma}[theorem]{Lemma}
\begin{document}

\title{Upper bound on the total number of knot $n$-mosaics}

\author[K. Hong]{Kyungpyo Hong}
\address{Department of Mathematics, Korea University, Anam-dong, Sungbuk-ku, Seoul 136-701, Korea}
\email{cguyhbjm@korea.ac.kr}
\author[H. Lee]{Ho Lee}
\address{Department of Mathematical Sciences, KAIST, 291 Daehak-ro, Yuseong-gu, Daejeon 305-701, Korea}
\email{figure8@kaist.ac.kr}
\author[H. J. Lee]{Hwa Jeong Lee}
\address{Department of Mathematical Sciences, KAIST, 291 Daehak-ro, Yuseong-gu, Daejeon 305-701, Korea}
\email{hjwith@kaist.ac.kr}
\author[S. Oh]{Seungsang Oh}
\address{Department of Mathematics, Korea University, Anam-dong, Sungbuk-ku, Seoul 136-701, Korea}
\email{seungsang@korea.ac.kr}

\thanks{2010 Mathematics Subject Classification: 57M25, 57M27, 81P15, 81P68}
\thanks{The corresponding author(Seungsang Oh) was supported by Basic Science Research Program through
the National Research Foundation of Korea(NRF) funded by the Ministry of Science,
ICT \& Future Planning(MSIP) (No.~2011-0021795).}
\thanks{This work was supported by the National Research Foundation of Korea(NRF) grant
funded by the Korea government(MEST) (No. 2011-0027989).}

\begin{abstract}
Lomonaco and Kauffman introduced a knot mosaic system to give a definition of a quantum knot system
which can be viewed as a blueprint for the construction of an actual physical quantum system.
A knot $n$-mosaic is an $n \times n$ matrix of 11 kinds of specific mosaic tiles representing a knot or a link
by adjoining properly that is called suitably connected.
$D_n$ denotes the total number of all knot $n$-mosaics.
Already known is that $D_1=1$, $D_2=2$, and $D_3=22$.
In this paper we establish the lower and upper bounds on $D_n$ 
$$\frac{2}{275}(9 \cdot 6^{n-2} + 1)^2 \cdot 2^{(n-3)^2} \ \leq \ D_n \
\leq \ \frac{2}{275}(9 \cdot 6^{n-2} + 1)^2 \cdot (4.4)^{(n-3)^2}.$$
and find the exact number of $D_4 = 2594$.
\end{abstract}

\maketitle

\section{Introduction} 

Knot theory and other areas of topology have made propound impact on quantum field theory, 
quantum computation and complexity of computation.
Lomonaco and Kauffman introduced a knot mosaic system to set the foundation
for a quantum knot system in the series of papers \cite{K, L, LK1, LK2, LK3, LK4}.
This paper was inspired from an open question about the enumeration of knot mosaics in \cite{LK2}.

Throughout this paper we will frequently use the term ``knot" to mean either a knot or a link
for simplicity of exposition.
Let $\mathbb{T}$ denote the set of the following $11$ symbols which are called {\em mosaic tiles\/}; 

\begin{figure}[ht]
\includegraphics{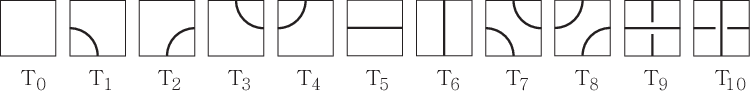}
\label{fi1}
\end{figure}

For a positive integer $n$,
we define an {\em $n$-mosaic\/} as an $n \times n$ matrix $M=(M_{ij})$ of mosaic tiles.
We denote the set of all $n$-mosaics by $\mathbb{M}^{(n)}$.
Obviously $\mathbb{M}^{(n)}$ has $11^{n^2}$ elements.
A {\em connection point\/} of a tile is defined as the midpoint of a mosaic tile edge
which is also the endpoint of a curve drawn on the tile.
Then each tile has zero, two or four connection points as follows; 

\begin{figure}[ht]
\includegraphics{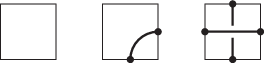}
\label{fi2}
\end{figure}

We say that two tiles in a mosaic are {\em contiguous\/} if they lie immediately next to each other
in either the same row or the same column.
A mosaic tile within a mosaic is said to be {\em suitably connected\/}
if each of its connection points touches a connection point of a contiguous tile.
A {\em knot $n$-mosaic\/} is an $n$-mosaic in which all tiles are suitably connected.
Then a knot $n$-mosaic represents a specific knot.
In Figure \ref{fig1}, we draw three examples of mosaics; a 4-mosaic, the Hopf link 4-mosaic
and the trefoil knot 4-mosaic.

\begin{figure}[ht]
\includegraphics{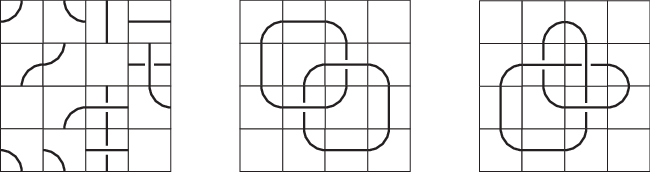}
\caption{Three examples of 4-mosaics}
\label{fig1}
\end{figure}

As an analog to the planar isotopy moves and the Reidemeister moves for standard knot diagrams,
Lomonaco and Kauffman created for knot mosaics the $11$ mosaic planar isotopy moves
and the mosaic Reidemeister moves in \cite{LK2}.
They conjectured that
for any two tame knots (or links) $K_1$ and $K_2$,
and their arbitrary chosen mosaic representatives $M_1$ and $M_2$, respectively,
$K_1$ and $K_2$ are of the same knot type if and only if
$M_1$ and $M_2$ are of the same knot mosaic type.
This means that tame knot theory and knot mosaic theory are equivalent.
Kuriya and Shehab \cite{KS} proved that Lomonaco-Kauffman conjecture is true.

Lomonaco and Kauffman also proposed a dozen of open questions relevant to quantum knot mosaics.
One natural question is how many knot $n$-mosaics are there.
Let $\mathbb{K}^{(n)}$ denote the subset of $\mathbb{M}^{(n)}$ of all knot $n$-mosaics,
and $D_n$ the total number of elements of $\mathbb{K}^{(n)}$.
The main theme in this paper is to establish upper and lower bounds on $D_n$.
Already known is that $D_1=1$, $D_2=2$ and $D_3=22$, 
for which the complete table of $\mathbb{K}^{(3)}$ is in Appendix A in \cite{LK2}.
One might gave a very loose upper bound $11^{n^2}$.

\begin{theorem} \label{thm:main}
For an integer $n \geq 3$,
$$\frac{2}{275}(9 \cdot 6^{n-2} + 1)^2 \cdot 2^{(n-3)^2} \ \leq \ D_n \
\leq \ \frac{2}{275}(9 \cdot 6^{n-2} + 1)^2 \cdot (4.4)^{(n-3)^2}.$$
\end{theorem}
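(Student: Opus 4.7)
The plan is to bound $D_n$ by analyzing the matching constraints between adjacent mosaic tiles at the level of their four edge-connection states. Each of the 11 tiles can be classified by a 4-tuple $(N,S,W,E) \in \{0,1\}^4$ recording which of its four edges carries a connection point; the suitably-connected condition then forces these labels to agree between neighbors, and a tile on the outer boundary of the mosaic must have $0$ on each edge that faces outward. The key local fact is that each tile has $0$, $2$, or $4$ connection points (even weight) and that at most $2$ tiles realize a given valid 4-tuple, with the value $2$ attained only at the all-ones pattern where the two crossing tiles both fit.

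For the upper bound, I would split the count into two factors. First, let $R_n$ denote the number of valid rows of length $n$: sequences of $n$ tiles that are suitably connected to one another with left- and right-boundary $W$-edge and $E$-edge equal to $0$, but with their $N$ and $S$ edges free. A transfer-matrix argument on the shared $W$-$E$ state, using only the per-tile counts described above, should produce the recurrence $R_n = 6 R_{n-1} - 5$ with closed form $R_n = 9 \cdot 6^{n-2} + 1$. Since the top row and bottom row of any knot $n$-mosaic are each valid rows in this sense, counting them independently accounts for the factor $R_n^2$. Second, after fixing the top and bottom rows, I would bound the number of completions by amortizing the per-tile choices in a $(n-3) \times (n-3)$ block of interior positions whose edge states are not already forced, yielding the factor $(4.4)^{(n-3)^2}$. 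The constant $\frac{2}{275}$ is then dictated by matching the known value $D_3 = 22$: when $n = 3$ the interior factor is $(4.4)^0 = 1$, so $\frac{2}{275} R_3^2 = 22$ forces the normalization.

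For the lower bound, I would exhibit an explicit family of at least $\frac{2}{275} \cdot R_n^2 \cdot 2^{(n-3)^2}$ distinct knot $n$-mosaics. The construction fixes outer tiles to route strands into and out of a large central $(n-3) \times (n-3)$ block, within which I place a crossing tile at every position; each crossing can independently be taken as an overcrossing or an undercrossing, producing the $2^{(n-3)^2}$ factor. Checking that the resulting configurations are knot mosaics amounts to verifying the edge-matching condition everywhere, which holds by construction, and noting that distinct choices of over/under at the crossings give distinct mosaics.

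The main obstacle will be establishing the amortized factor $4.4$ per interior tile in the upper bound. The four edges of an interior tile are strongly coupled through the constraints propagated from the fixed rows, so the naive per-tile bound of $11$ tile types (or even the $9$ distinct valid patterns) is far too loose to give $4.4$. Obtaining this sharper constant will likely require an averaged transfer-matrix or state-sum estimate showing that advancing through the mosaic one row (or one column) at a time multiplies the partial count by at most $(4.4)^{n-3}$. By contrast, the $2\times 2$ transfer-matrix calculation for $R_n$ and the verification of the lower-bound construction are comparatively routine once this core estimate is in place.
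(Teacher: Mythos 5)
Your general strategy---classify tiles by their edge connection points, exploit the fact that a tile with both ``incoming'' connection points has $5$ realizations while otherwise it has $2$, and amortize---is the right one, and it is essentially what the paper does. But several of your concrete steps are wrong or missing. First, your tile census is incorrect: the all-ones pattern is realized by \emph{four} tiles ($T_7,T_8,T_9,T_{10}$, the two double arcs and the two crossings), not two, and every valid two-point or zero-point pattern by exactly one tile ($11=1+6+4$). Second, $9\cdot 6^{n-2}+1$ is \emph{not} the number of valid rows with free $N$/$S$ edges; the transfer matrix for such rows is $\left(\begin{smallmatrix}2&2\\2&5\end{smallmatrix}\right)$ with eigenvalues $1$ and $6$, giving $\frac{6^n+4}{5}$ (e.g.\ $44$ for $n=3$, not $55$). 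In the paper the squared factor arises from a completely different decomposition: one fills the first row and first column of the $(n-1)\times(n-1)$ block ($2^{2n-3}$ choices), then the second row and second column, tracking at each step the exact fraction $p_j=\frac{2\cdot 6^j-2}{3\cdot 6^j+2}$ of partial fillings that carry a connection point forward; the product telescopes to $\frac{2}{275}(9\cdot 6^{n-2}+1)^2$. The last row and column of the full mosaic are then \emph{forced} by the parity of connection points, which is why the free interior is $(n-3)\times(n-3)$. Your bookkeeping (top row, bottom row, interior) leaves an $(n-2)\times n$ interior and cannot produce the exponent $(n-3)^2$, and the constant $\frac{2}{275}$ must be derived, not reverse-engineered from $D_3=22$.

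The two most serious gaps are the ones carrying all the content. For the upper bound, the factor $4.4$ per interior tile is exactly the step you defer: the paper gets it by showing that among the partial quasimosaics the fraction having a connection point on any given interior edge lies between $\frac12$ and $\frac45$ (because a $tl$-cp tile has $4$ out of $5$ choices with an $r$-cp, and a $\widehat{tl}$-cp tile has $1$ out of $2$), so each interior tile multiplies the count by at most $\frac45\cdot 5+\frac15\cdot 2=4.4$ and at least $0\cdot 5+1\cdot 2=2$. Without this averaging argument you have no upper bound at all. For the lower bound, your construction---fix the outer tiles, put a crossing at every central position, vary over/under---produces only $2^{(n-3)^2}$ mosaics, which falls short of the claimed bound by the factor $\frac{2}{275}(9\cdot 6^{n-2}+1)^2$. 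What is actually needed (and what the paper proves) is that \emph{every one} of the $\frac{2}{275}(9\cdot 6^{n-2}+1)^2$ admissible fillings of the outer two rows and columns extends in at least $2^{(n-3)^2}$ ways, since each interior tile always admits at least $2$ suitable tiles whatever its forced $t$/$l$ state, and the final row and column can always be completed (uniquely). As written, your proposal does not establish either inequality.
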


\begin{theorem} \label{thm:D4}
$D_4 = 2594$.
\end{theorem}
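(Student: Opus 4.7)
The plan is to compute $D_4$ exactly by a row-by-row transfer matrix enumeration. For each of the five horizontal cuts in a $4$-mosaic (above row $1$, between rows $i$ and $i+1$ for $i=1,2,3$, and below row $4$), the state of the partially built mosaic is captured by the subset $s \subseteq \{1,2,3,4\}$ recording which of the four column positions carries a vertical connection point crossing the cut. This gives $2^4 = 16$ possible states per cut, and at the top and bottom cuts of the mosaic $s$ is forced to $\emptyset$, since connection points must not protrude from the outer frame.

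Next I would assemble a $16 \times 16$ transfer matrix $T$ whose entry $T_{s,s'}$ counts the number of ways to fill one row of four tiles consistent with top-pattern $s$, bottom-pattern $s'$, and all horizontal connections inside the row suitably matched. Each $T_{s,s'}$ is itself computed by a second, smaller transfer-matrix argument along the row: I sweep left-to-right over the four positions, carrying a single bit that indicates whether the last placed tile leaves a pending right-facing connection. At each column the top and bottom bits are fixed by $s$ and $s'$ and the left bit is inherited, so a bounded case analysis of the eleven tiles $T_0,\ldots,T_{10}$ returns the number of admissible tiles at that position. Starting the sweep in the ``no pending right connection'' state and requiring it to end in that state too encodes the fact that the leftmost and rightmost column edges of the mosaic cannot carry connection points. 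With $T$ in hand, $D_4$ is read off as the $(\emptyset,\emptyset)$ entry of $T^4$.

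The main technical obstacle is the bookkeeping in the inner case analysis: among the eleven tiles one must distinguish the $(\text{top},\text{bottom},\text{left},\text{right})$ connection patterns correctly, and in particular account for the two crossing tiles $T_9$ and $T_{10}$, which share the identical four-edge connection pattern but are distinct tiles and therefore both contribute whenever a full-valence tile is called for at a column. Once $T$ is tabulated, computing $T^4$ is a fixed finite calculation that can be carried out by hand or by a short computer program; it returns the stated value $D_4=2594$. This outcome is consistent with Theorem~\ref{thm:main}, which for $n=4$ yields the interval $1537 \le D_4 \le 3380$, and provides a useful sanity check on both statements.
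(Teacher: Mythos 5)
Your transfer-matrix argument is sound and would indeed yield $D_4=2594$, but it takes a genuinely different route from the paper's. The paper exploits the smallness of $n=4$: it enumerates only the central $2\times 2$ block $M_{22},M_{23},M_{32},M_{33}$ by splitting into $2^4$ cases according to which of the four central edges carry connection points, obtaining $d_c=5^4+4\cdot 5^2\cdot 2^2+4\cdot 5\cdot 2^3+7\cdot 2^4=1297$ admissible central blocks, and then observes (again by the parity of connection points on each tile) that every such block extends to the twelve boundary tiles in exactly two ways, whence $D_4=2d_c=2594$. You replace this with a $16\times 16$ row-transfer matrix $T$ and read off $(T^4)_{\emptyset,\emptyset}$; this is more systematic and, unlike the paper's method, scales to arbitrary $n$ (it would in principle sharpen Theorem~\ref{thm:main} to exact values), at the cost of a much larger tabulation for the single case $n=4$. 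Two remarks. First, your inner left-to-right sweep simplifies considerably: since every tile has an even number of connection points, the right bit at each column is forced to equal the parity of the top, bottom and left bits, so each column contributes a factor of $4$ (from $T_7,T_8,T_9,T_{10}$, when all four edges are connected) or $1$, and $T_{s,s'}=0$ unless $|s|+|s'|$ is even. Second, as written you assert rather than exhibit the outcome of the finite computation; since the content of the theorem is precisely this number, the tabulation of $T$ and the evaluation of $(T^4)_{\emptyset,\emptyset}$ must actually be carried out and recorded for the argument to be complete.
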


Recently, the authors announced several improved results on $D_n$ in the series of papers.
They concerned the exact number of $D_n$ for small $n=4,5,6$ \cite{HLLO}, 
the state matrix algorithm, so called, producing the exact enumeration of general $D_n$
that uses recursion formula of state matrices \cite{OHLL},
and more precise bounds of the quadratic exponential growth ratio of $D_n$ \cite{O}.

Another interesting question relevant to knot mosaics is the 
{\em mosaic number\/} $m(K)$ of a knot $K$ as the smallest integer $n$
for which $K$ is representable as a knot $n$-mosaic.
Is this mosaic number related to the crossing number of a knot?
As an concrete answer, the authors \cite{LHLO} established an upper bound on the mosaic number as follows;
if $K$ be a nontrivial knot or a non-split link except the Hopf link,
then $m(K) \leq c(K) + 1$,
and moreover if $K$ is prime and non-alternating except $6^3_3$, then $m(K) \leq c(K) - 1$.
Note that the mosaic number of the Hopf link is 4,
and the prime and non-alternating $6^3_3$ link is 6,
even though their crossing numbers are 2 and 6, respectively.

\section{Proof of Theorem \ref{thm:main}}

For $n \geq 3$, $\mathbb{K}^{(n)}$ is the set of knot $n$-mosaics,
so each mosaic is filled by suitably connected $n^2$ mosaic tiles entirely.
$\mathbb{K}^{(n)}_1$ denotes the set of so called {\em $n$-quasimosaics\/} each of which is filled by
suitably connected $2n-3$ mosaic tiles only at $M_{1j}$ and $M_{i1}$, $i,j=1,2, \cdots , n-1$.
It is indeed a part of a knot $n$-mosaic, and so possibly has connection points on the boundary
contained in the interior of the knot $n$-mosaic.
Similarly $\mathbb{K}^{(n)}_2$ denotes the set of $n$-quasimosaics each of which is filled by
suitably connected $4n-8$ tiles at $M_{1j}$, $M_{2j}$, $M_{i1}$, and $M_{i2}$, $i,j=1,2, \cdots , n-1$.
Also $\mathbb{K}^{(n)}_3$ denotes the set of $n$-quasimosaics each of which is filled by
suitably connected $(n-1)^2$ tiles at $M_{ij}$, $i,j=1,2, \cdots , n-1$.
Let $d_1$, $d_2$ and $d_3$ denote the numbers of elements of
$\mathbb{K}^{(n)}_1$, $\mathbb{K}^{(n)}_2$ and $\mathbb{K}^{(n)}_3$, respectively.
See three typical examples of elements of $\mathbb{K}^{(6)}_1$, $\mathbb{K}^{(6)}_2$ and
$\mathbb{K}^{(6)}_3$ in Figure \ref{fig2}.

\begin{figure}[ht]
\includegraphics{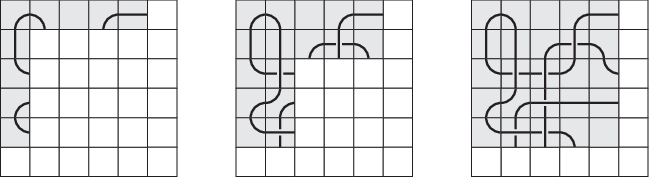}
\caption{Three elements of $\mathbb{K}^{(6)}_1$, $\mathbb{K}^{(6)}_2$ and $\mathbb{K}^{(6)}_3$}
\label{fig2}
\end{figure}

For simplicity of exposition, a mosaic tile is called {\em $t$-cp\/} if it has a connection point
on its top edge,
and similarly {\em $b$-, $l$-\/} or {\em $r$-cp\/} when on its bottom, left or right edge, respectively.
Sometimes we use two letters, for example, $tl$-cp in the case of both $t$-cp and $l$-cp.
We use the sign $\hat{}$ \/ for negation such as
$\hat{t}$-cp means not $t$-cp,
$\hat{t} \hat{l}$-cp means both $\hat{t}$-cp and $\hat{l}$-cp, and
$\widehat{tl}$-cp (which is differ from $\hat{t} \hat{l}$-cp) means not $tl$-cp,
i.e. $\hat{t} l$-, $t \hat{l}$- or $\hat{t} \hat{l}$-cp.

First we figure out $\mathbb{K}^{(n)}_1$ and determine the number $d_1$.

\begin{lemma} \label{lem:d1}
$d_1 = 2^{2n-3}$.
\end{lemma}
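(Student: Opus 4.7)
The plan is to show that when the $L$-shape of $2n-3$ tiles is built in sequence---first $M_{11}$, then marching rightward along the first row and downward along the first column---each tile admits exactly two valid choices given the ones already placed, so that the total count is $2^{2n-3}$. Concretely, the corner tile $M_{11}$ abuts two boundary edges of the mosaic and must be $\hat{t}\hat{l}$-cp; each other first-row tile $M_{1j}$ ($2\leq j \leq n-1$) must be $\hat{t}$-cp; and each other first-column tile $M_{i1}$ ($2\leq i \leq n-1$) must be $\hat{l}$-cp. Along shared interior edges of the $L$-shape the two cps must match, while on edges bordering unfilled positions of the quasimosaic no condition is imposed: such cps are legitimate dangling ends of the partial diagram recorded by the quasimosaic.

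I would then carry out the local enumerations. Because the eleven tiles in $\mathbb{T}$ have only $0$, $2$, or $4$ connection points, no one-cp or three-cp tile is available, and the $\hat{t}$-cp constraint eliminates every four-cp tile. Under $\hat{t}\hat{l}$-cp this leaves exactly two tiles at $M_{11}$: the blank tile and the unique corner tile with cp-pattern $rb$; both satisfy $R=B$, so choosing $M_{11}$ simultaneously prescribes the $l$-cp of $M_{12}$ and the $t$-cp of $M_{21}$. For a subsequent first-row tile under $\hat{t}$-cp, a four-case check over $(L,R)\in\{0,1\}^2$ shows that each pair is realized by exactly one tile (blank, horizontal, $lb$-corner, or $rb$-corner), so once $L$ is prescribed by the left neighbor's $R$ there are exactly two valid choices, indexed by $R$. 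At the row endpoint $M_{1,n-1}$ both outer edges are free, but the same enumeration again yields exactly two admissible tiles for each value of the inherited $L$-cp. The analogous statement for the first-column tiles follows by the transpose symmetry.

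Finally I would multiply these contributions: $M_{11}$ gives a factor of $2$, and each of the $n-2$ remaining first-row tiles and the $n-2$ remaining first-column tiles contributes an independent factor of $2$, since the two branches interact only through $M_{11}$, which is already pinned down. The total is $2 \cdot 2^{n-2} \cdot 2^{n-2} = 2^{2n-3}$, as claimed. The main obstacle I anticipate is not arithmetic but the convention at the outer edges of the $L$-shape: one has to explicitly allow the dangling cps that border unfilled squares of the quasimosaic, since a strict reading of ``suitably connected'' would force every such cp to vanish and cascade all tiles in the $L$-shape to the blank tile.
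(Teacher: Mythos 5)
Your proposal is correct and follows essentially the same route as the paper: fill the $L$-shape sequentially starting from $M_{11}$, observe that the boundary edges force $\hat{t}$-cp (resp.\ $\hat{l}$-cp) along the first row (resp.\ column) so that each tile has exactly two admissible choices given its predecessor, and multiply to get $2^{2n-3}$. Your explicit four-case check over $(L,R)$ and your remark about allowing dangling connection points toward unfilled positions just make precise what the paper's proof (which names the concrete tiles $T_0, T_1, T_2, T_3, T_5, T_6$) leaves implicit.
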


\begin{proof}
We use the induction on $n$.
The first mosaic tile $M_{11}$ has 2 choices whether $T_0$ or $T_2$.
The next tile $M_{12}$ has always 2 choices after any choices of $M_{11}$ as follows;
if $M_{11} =T_0$, then $M_{12}$ is $\hat{t} \hat{l}$-cp,
so $M_{12}$ is either $T_0$ or $T_2$,
or if $M_{11} =T_2$, then $M_{12}$ must be $\hat{t} l$-cp to be suitably connected,
so $M_{12}$ is either $T_1$ or $T_5$.
By the same reason each $M_{1j}$, $j=3, \cdots , n-1$, has always 2 choices;
if $M_{1(j-1)}$ is $\hat{r}$-cp, then $\hat{t}\hat{l}$-cp $M_{1j}$ is either $T_0$ or $T_2$,
or if $M_{1(j-1)}$ is $r$-cp, then $\hat{t}l$-cp $M_{1j}$ is either $T_1$ or $T_5$.
We can follow the same argument when we choose mosaic tiles $M_{i1}$, $i=2, \cdots , n-1$.
Thus if $M_{(i-1)1}$ is $\hat{b}$-cp,
then $\hat{t}\hat{l}$-cp $M_{i1}$ is either $T_0$ or $T_2$,
or if $M_{(i-1)1}$ is $b$-cp, then $t\hat{l}$-cp $M_{i1}$ is either $T_3$ or $T_6$.
Therefore each tile has exactly 2 choices.
Since each $n$-quasimosaic of $\mathbb{K}^{(n)}_1$ consists of $2n-3$ mosaic tiles,

$d_1 = 2^{2n-3}$.
\end{proof}

\noindent {\bf Fact 1.\/} For any $j=2, \cdots , n-1$, exactly the half of $\mathbb{K}^{(n)}_1$
have $b$-cp $M_{1j}$'s and the rest half have $\hat{b}$-cp $M_{1j}$'s.
Similarly for any $i=2, \cdots , n-1$, exactly the half of $\mathbb{K}^{(n)}_1$
have $r$-cp $M_{i1}$'s and the rest half have $\hat{r}$-cp $M_{i1}$'s.
\vspace{3mm}

\noindent {\bf Fact 2.\/} For any $i,j=2, \cdots , n-1$,
$M_{ij}$ is one of $T_4$, $T_7$, $T_8$, $T_9$ or $T_{10}$ if it is $tl$-cp,
either $T_1$ or $T_5$ if $\hat{t} l$-cp,
either $T_3$ or $T_6$ if $t \hat{l}$-cp, and
either $T_0$ or $T_2$ if $\hat{t} \hat{l}$-cp.
Therefore each $M_{ij}$ has 5 choices of mosaic tiles if it is $tl$-cp,
and 2 choices if it is $\widehat{tl}$-cp.
\vspace{3mm}

Next we figure out $\mathbb{K}^{(n)}_2$ and determine the number $d_2$.

\begin{lemma} \label{lem:d2}
$d_2 = \frac{2}{275} (9 \cdot 6^{n-2} + 1)^2$.
\end{lemma}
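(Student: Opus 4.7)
The plan is to decompose each element of $\mathbb{K}^{(n)}_2$ into two overlapping pieces---a $2 \times (n-1)$ \emph{row strip} consisting of the top two rows, and an $(n-1) \times 2$ \emph{column strip} consisting of the left two columns---which share the $2 \times 2$ corner block $\{M_{ij}: i,j \in \{1,2\}\}$. The count then splits into (a) enumerating row strips, (b) using symmetry for column strips, and (c) understanding how compatible strips glue along their shared corner.

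For step (a), I would introduce a $4 \times 4$ transfer matrix $A$ indexed by the four possible right-cp states $(r_{1,j}, r_{2,j}) \in \{0,1\}^2$ at column $j$. Each column-to-column transition is computed directly from Fact 2 in the spirit of the proof of Lemma \ref{lem:d1}: the top tile $M_{1,j+1}$ has $2$ choices (it is $\hat{t}$-cp on top and $l$-cp determined by $r_{1,j}$), and then $M_{2,j+1}$ has $2$ or $5$ choices according to its $(t,l)$-pattern, producing a matrix whose nonzero entries are $1$'s and $4$'s. The initial distribution over first-column states is uniform on the four states by Facts 1 and 2 applied to column 1. Diagonalizing $A$, I expect a dominant eigenvalue $6$ (the source of the $6^{n-2}$ growth) and a smaller eigenvalue $1$ supplying the additive correction; iterating yields the row-strip count $a_{n-1} = \frac{2(9 \cdot 6^{n-2} + 1)}{5}$. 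Step (b) is immediate, since the symmetry built into Fact 2 under swapping $(t,l)$ with $(l,t)$ and $(b,r)$ with $(r,b)$ gives the identical count for column strips.

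For step (c), each element of $\mathbb{K}^{(n)}_2$ is exactly a compatible pair (row strip, column strip) agreeing on the shared $2 \times 2$ corner, so I would argue that summing over such compatible pairs produces
\[
d_2 = \frac{a_{n-1}^{\,2}}{a_1},
\]
where $a_1 = 22$ is the corner count (which also equals $D_3$). Routine algebra then simplifies $(a_{n-1})^2/22$ to $\frac{2}{275}(9 \cdot 6^{n-2} + 1)^2$.

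Step (c) is the main obstacle: the joint distribution of the corner's right-edge state $(r_{1,2}, r_{2,2})$ and bottom-edge state $(b_{2,1}, b_{2,2})$ is not a product, since each tile carries an even number of connection points, forcing the parity constraint $r_{1,2}+r_{2,2}\equiv b_{2,1}+b_{2,2}\pmod{2}$. I plan to handle this by grouping corners by parity class, showing that within each class the row-extension count (a function only of the right-edge state) and the column-extension count (a function only of the bottom-edge state) factor symmetrically, and then checking that the two parity classes contribute equal shares. After this, the clean ratio $a_{n-1}^{\,2}/a_1$ emerges and the lemma follows.
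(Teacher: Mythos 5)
Your steps (a) and (b) are correct and can be made fully rigorous. Indexing column states by the pair of $r$-cp indicators of $(M_{1j},M_{2j})$, the four admissible first columns realize the four states once each, the transfer matrix has column sums $4$ or $7$ according as $M_{2j}$ is $\hat{r}$-cp or $r$-cp, the induced two-dimensional dynamics has eigenvalues $6$ and $1$ exactly as you predict, and the strip counts $4,22,130,778,\dots$ satisfy $t_{k+1}=6t_k-2$, giving $a_{n-1}=\frac{2}{5}(9\cdot 6^{n-2}+1)$. The fatal problem is step (c): the gluing identity $d_2=a_{n-1}^2/a_1$ is not merely unproved but false. From the transfer matrix one sees that the row-extension count $R(c)$ of a corner $c$ depends only on whether $M_{22}$ is $r$-cp, and the column-extension count $C(c)$ only on whether $M_{22}$ is $b$-cp; however, over the $22$ corners the joint $(b,r)$-type of $M_{22}$ occurs with multiplicities $4,4,4,10$ on $(0,0),(0,1),(1,0),(1,1)$, not the product profile $\frac{64}{22},\frac{112}{22},\frac{112}{22},\frac{196}{22}$. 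This positive correlation survives any grouping by parity (the parity relation also involves $r_1$ and $b_1$ and does not force a product structure on $(b_2,r_2)$ within a class), so $\sum_c R(c)C(c)\neq \frac{1}{22}\bigl(\sum_c R(c)\bigr)\bigl(\sum_c C(c)\bigr)$. Concretely, for $n=4$ the honest gluing sum gives $4\cdot 4\cdot 4+4\cdot 4\cdot 7+4\cdot 7\cdot 4+10\cdot 7\cdot 7=778$, whereas $a_3^2/22=16900/22$ is not even an integer.

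For comparison, the paper's proof is organized quite differently: it fills the second row one tile at a time, tracking the fraction $p_j$ of quasimosaics whose $M_{2j}$ is $r$-cp, and derives $d_{2j}=(2+\frac{3}{2}p_{j-1})d_{2(j-1)}$; this part is exact and agrees with your transfer matrix (for instance $d_{2(n-1)}=2^{n-3}a_{n-1}$). It then asserts that the second column contributes the same factors again, squaring them. That last step silently assumes that after the whole second row has been filled, $M_{22}$ is still $b$-cp in exactly the fraction $\frac{7}{11}$ of cases; but filling the row reweights the corners toward $r$-cp (hence toward $br$-cp) $M_{22}$'s, so for $n=4$ the true fraction is $\frac{86}{130}\neq\frac{7}{11}$. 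In other words, the paper's squaring step commits precisely the independence error you flagged as your ``main obstacle,'' which is why the stated closed form fails to be an integer for $n\geq 4$. Your strategy, executed correctly, computes the true $d_2=\sum_c R(c)C(c)$ but cannot recover the formula in the statement; no completion of step (c) will, because the formula itself is only an approximation to the count.
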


\begin{proof}
Similar to the definitions of $\mathbb{K}^{(n)}_2$ and $d_2$,
let $\mathbb{K}^{(n)}_{2j}$, $j=2, \cdots , n-1$, denote the set of all $n$-quasimosaics
each of which is filled by suitably connected mosaic tiles as in $\mathbb{K}^{(n)}_1$
and more tiles at $M_{2k}$, $k=2, \cdots , j$.
Let $d_{2j}$ denote the number of elements of $\mathbb{K}^{(n)}_{2j}$.

First we fill the mosaic tile $M_{22}$.
By Fact 1, exactly $(\frac{1}{2})^2 d_1$ elements of $\mathbb{K}^{(n)}_1$
have $tl$-cp $M_{22}$'s to be suitably connected,
and the rest $\frac{3}{4} d_1$ elements have $\widehat{tl}$-cp $M_{22}$'s.
By Fact 2, $d_{22} = \frac{1}{4} d_1 \cdot 5 + \frac{3}{4} d_1 \cdot 2 = \frac{11}{4} d_1$.
Note that among all $d_{22}$ elements of $\mathbb{K}^{(n)}_{22}$,
$\frac{1}{4} d_1 \cdot 4 + \frac{3}{4} d_1 \cdot 1 = \frac{7}{4} d_1 = \frac{7}{11} d_{22}$
elements have $r$-cp $M_{22}$'s.
Let $p_2 = \frac{7}{11}$.

Now we use the induction again.
For any $j=3, \cdots , n-1$,
the same argument above guarantees that
exactly $\frac{1}{2} p_{j-1} \cdot d_{2(j-1)}$ elements of $\mathbb{K}^{(n)}_{2(j-1)}$
can be suitably connected with $tl$-cp $M_{2j}$'s,
and the rest elements with $\widehat{tl}$-cp $M_{2j}$'s.
Thus $d_{2j} = \frac{1}{2} p_{j-1} \cdot d_{2(j-1)} \cdot 5 +
(1 - \frac{1}{2} p_{j-1}) \cdot d_{2(j-1)} \cdot 2 =
(2 + \frac{3}{2} p_{j-1}) \cdot d_{2(j-1)}$.
Then among all $d_{2j}$ elements of $\mathbb{K}^{(n)}_{2j}$,
$\frac{1}{2} p_{j-1} \cdot d_{2(j-1)} \cdot 4 +
(1 - \frac{1}{2} p_{j-1}) \cdot d_{2(j-1)} \cdot 1 =
(1 + \frac{3}{2} p_{j-1}) \cdot d_{2(j-1)} =
\frac{2 + 3 p_{j-1}}{4 + 3 p_{j-1}} \cdot d_{2j}$
elements have $r$-cp $M_{2j}$'s.
Let $p_j = \frac{2 + 3 p_{j-1}}{4 + 3 p_{j-1}}$.

Therefore $d_{2(n-1)} = d_1 \cdot \frac{11}{4} \cdot
(2 + \frac{3}{2} p_2) \cdots (2 + \frac{3}{2} p_{n-2})$.
Since $p_j = \frac{2 \cdot 6^j - 2}{3 \cdot 6^j + 2}$ satisfies the recurrence relation for \{$p_j$\},
we have the equation $2 + \frac{3}{2} p_j =
\frac{1}{2} \cdot \frac{3 \cdot 6^{j+1} +2}{3 \cdot 6^j +2}$.
To fill all the tiles (especially on the second row and the second column)
of elements of $\mathbb{K}^{(n)}_2$;

$d_2 = d_1 \cdot \frac{11}{4} \cdot
(2 + \frac{3}{2} p_2)^2 \cdots (2 + \frac{3}{2} p_{n-2})^2
=\frac{2}{275} (9 \cdot 6^{n-2} + 1)^2$.
\end{proof}

Now we figure out $\mathbb{K}^{(n)}_3$ and find bounds on $d_3$.

\begin{lemma} \label{lem:d3}
$\frac{2}{275} (9 \cdot 6^{n-2} + 1)^2 \cdot 2^{(n-3)^2} \leq
d_3 \leq \frac{2}{275} (9 \cdot 6^{n-2} + 1)^2 \cdot (4.4)^{(n-3)^2}$.
\end{lemma}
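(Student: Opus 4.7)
The lower bound is immediate. By Fact~2, every interior tile $M_{ij}$ with $i,j\ge 3$ admits at least two mosaic tile choices regardless of its $t$-cp and $l$-cp status, so each of the $d_2$ border quasimosaics counted by Lemma~\ref{lem:d2} extends to at least $2^{(n-3)^2}$ elements of $\mathbb{K}^{(n)}_3$.

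For the upper bound, my plan is to mirror the inductive architecture of Lemma~\ref{lem:d2} by filling the $(n-3)^2$ interior tiles row by row, left to right. Let $\mathbb{K}^{(n)}_{3(ij)}$ denote the set of partial quasimosaics filled through position $(i,j)$, write $d_{3(ij)}=|\mathbb{K}^{(n)}_{3(ij)}|$, and let $p_{ij}$ be the fraction of elements of $\mathbb{K}^{(n)}_{3(ij)}$ whose tile $M_{ij}$ is $r$-cp. By Fact~2, if $P^{tl}_{ij}$ denotes the fraction of $\mathbb{K}^{(n)}_{3(i,j-1)}$ in which $M_{ij}$ is forced to be $tl$-cp, the step multiplier satisfies
\[
\frac{d_{3(ij)}}{d_{3(i,j-1)}} \;=\; 5P^{tl}_{ij}+2(1-P^{tl}_{ij}) \;=\; 2+3P^{tl}_{ij}.
\]
The same averaging---using that among the five $tl$-cp tiles exactly four are $r$-cp, while each of the three $\widehat{tl}$-cp cases offers two tiles of which exactly one is $r$-cp---yields the structural identity
\[
p_{ij} \;=\; \frac{3P^{tl}_{ij}+1}{3P^{tl}_{ij}+2},
\]
which lies in $[1/2,4/5]$ for all $P^{tl}_{ij}\in[0,1]$. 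Hence $p_{ij}\le 4/5$ unconditionally.

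The remaining task is to bound $P^{tl}_{ij}\le 4/5$ at every step. Since $P^{tl}_{ij}$ is a probability of an intersection, $P^{tl}_{ij}\le P(M_{i,j-1}\text{ is }r\text{-cp in }\mathbb{K}^{(n)}_{3(i,j-1)}) = p_{i,j-1}$, and for $j\ge 4$ the previous paragraph already gives $p_{i,j-1}\le 4/5$. For the first tile of each row we instead invoke the border: $P^{tl}_{i,3}$ is bounded by $P(M_{i,2}^{r})$, whose value in $\mathbb{K}^{(n)}_2$ is at most $2/3<4/5$ by the closed form $p_k=(2\cdot 6^k-2)/(3\cdot 6^k+2)$ derived for column~$2$ inside the proof of Lemma~\ref{lem:d2}. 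Consequently each of the $(n-3)^2$ step multipliers is at most $2+3\cdot(4/5)=22/5=4.4$, and chaining them with Lemma~\ref{lem:d2} gives
\[
d_3 \;\le\; d_2\cdot(4.4)^{(n-3)^2} \;=\; \frac{2}{275}(9\cdot 6^{n-2}+1)^2\cdot(4.4)^{(n-3)^2}.
\]

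The delicate point is the $j=3$ case: the marginal $p_{i,2}$ is naturally computed in the border distribution $\mathbb{K}^{(n)}_2$, whereas the multiplier uses its value in the enlarged distribution $\mathbb{K}^{(n)}_{3(i-1,n-1)}$ obtained after filling all interior rows $3,\ldots,i-1$. I would resolve this by observing that the interior fillings of rows $3,\ldots,i-1$ depend on the border only through the row~$2$ $b$-cp pattern and through $M_{k,2}^{r}$ for $k<i$; conditioning on any such data still leaves $M_{i,2}$ with at most five admissible tiles of which at most four are $r$-cp, so the conditional---and hence marginal---$r$-cp probability of $M_{i,2}$ in the re-weighted distribution remains capped at $4/5$.
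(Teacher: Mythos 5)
Your argument is the same as the paper's: both bounds come from a per-tile multiplier, $2=0\cdot 5+1\cdot 2$ below and $4.4=\frac{4}{5}\cdot 5+\frac{1}{5}\cdot 2$ above, where the $\frac{4}{5}$ cap on the $tl$-cp fraction is obtained by dominating it with the $r$-cp fraction of the left neighbour and using the ``4 of 5 versus 1 of 2'' count from Fact~2. In fact you are \emph{more} careful than the paper, which simply asserts that ``between one-half and four-fifths'' of the quasimosaics have $r$-cp $M_{ij}$'s and never distinguishes the tile just filled (where your identity $p_{ij}=(3P+1)/(3P+2)\le 4/5$ is exact) from tiles whose marginal gets re-weighted by later choices. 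One caution on your final paragraph, though: your patch for the $j=3$ step is not airtight. The tile $M_{i,2}$ sits in the middle of the already-completed border, so conditioned on everything upstream its law in $\mathbb{K}^{(n)}_2$ is not uniform over its 5 (or 2) admissible tiles but is weighted by the number of downstream completions of column~2, which depends on its $b$-cp status; e.g.\ for $M_{n-2,2}$ forced $tl$-cp with $M_{n-1,1}$ $r$-cp, the four $br$-cp tiles each admit 5 continuations while $T_4$ admits only 2, giving a conditional $r$-cp probability of $20/22=10/11>4/5$. So ``at most four of five admissible tiles are $r$-cp'' does not by itself cap the relevant conditional probability at $4/5$. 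This loose end is not a defect relative to the paper --- the published proof silently elides exactly the same issue --- but it means neither your write-up nor the original fully justifies the $4/5$ bound at the first column of each interior row.
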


\begin{proof}
Let $i,j=3, \cdots , n-1$.
As a continuation of Fact 2,
if $M_{ij}$ is $tl$-cp, then four tiles $T_7$, $T_8$, $T_9$, and $T_{10}$ among 5 choices
have $r$-cp,
or if $M_{ij}$ is $\widehat{tl}$-cp, then one tile among 2 choices has $r$-cp.
This fact guarantees that between one-half and four-fifths quasimosaics
of $\mathbb{K}^{(n)}_3$ have $r$-cp $M_{ij}$'s, and similarly for $b$-cp $M_{ij}$'s.

Unlike the argument in the proof of Lemma \ref{lem:d2},
the two probabilities of $M_{ij}$ having $t$-cp and $l$-cp are not independent.
To calculate $d_3$, we thus have to multiply to $d_2$
at least $0 \cdot 5 + (1 - 0) \cdot 2 = 2$ and
at most $\frac{4}{5} \cdot 5 + (1 - \frac{4}{5}) \cdot 2 = 4.4$ for each $M_{ij}$.
Thus we have;

$d_2 \cdot 2^{(n-3)^2} \leq d_3 \leq d_2 \cdot (4.4)^{(n-3)^2}$.
\end{proof}

Finally we will finish the proof of Theorem \ref{thm:main}.
For each $n$-quasimosaic of $\mathbb{K}^{(n)}_3$, there is exactly one way to fill
mosaic tiles to be suitably connected at every $M_{nj}$ or $M_{in}$
where $i,j = 1, \cdots , n$,
because every tile has even numbered connection points.
This implies that $D_n = d_3$.

Indeed the inequality of the upper bound appears only on Lemma \ref{lem:d3}.
This means that the equality holds for $n=3$, so $D_3=22$.

\section{$D_4 = 2594$}

In this section we consider $\mathbb{K}^{(4)}$ and find the exact number $D_4$.
$\mathbb{K}^{(4)}_c$ denotes the set of all $4$-quasimosaics each of which is filled by
suitably connected 4 mosaic tiles only at $M_{ij}$, $i,j=2,3$.
Let $d_c$ denote the number of elements of $\mathbb{K}^{(4)}_c$.
A common edge of two $M_{ij}$'s is called a {\em central edge\/}.
Note that there are four central edges as bold segments depicted in Figure \ref{fig3}.
\vspace{3mm}

\begin{figure}[ht]
\includegraphics{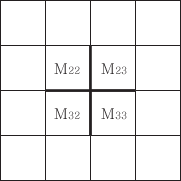}
\caption{Four central edges}
\label{fig3}
\end{figure}

\noindent {\bf Fact 3.\/} As in Fact 2,
if both central edges of $M_{ij}$ have connection points,
then $M_{ij}$ has 5 choices of mosaic tiles.
Otherwise, it has 2 choices.
\vspace{3mm}

First we figure out $\mathbb{K}^{(4)}_c$ and find the number $d_c$.
Since each central edge has 2 cases whether it has a connection point or not,
we split into 16 cases whether each of four central edges has a connection point or not.

Among 16 cases,
there is only one case where all four central edges have connection points.
By Fact 3, every $M_{ij}$ has 5 choices, so we have $5^4$ different $4$-quasimosaics in $\mathbb{K}^{(4)}_c$.
There are four cases where exactly three central edges have connection points.
In each case two of $M_{ij}$'s have 5 choices and the other two have 2 choices,
and so we have $5^2 \cdot 2^2$ different $4$-quasimosaics.
There are another four cases where
only two perpendicular central edges have connection points.
In each case only one of $M_{ij}$'s has 5 choices and the other three have 2 choices,
and so we have $5 \cdot 2^3$.
In each of the rest seven cases,
every $M_{ij}$ has 2 choices, so we have $2^4$.
Thus we have the following;

$d_c = 5^4 + 4 \cdot 5^2 \cdot 2^2 + 4 \cdot 5 \cdot 2^3 + 7 \cdot 2^4 = 1297$.

Finally we are ready to finish the proof of Theorem \ref{thm:D4}.
For each $4$-quasimosaic in $\mathbb{K}^{(4)}_c$, there are exactly two ways to fill
mosaic tiles to be suitably connected at the rest twelve boundary $M_{ij}$'s.
For, every tile has even numbered connection points,
so the union of boundary edges of $M_{22} \cup M_{23} \cup M_{32} \cup M_{33}$
has even number of connection points.
This implies that $D_4 = 2 d_c$.

\end{document}